\documentclass[]{theclass} 
\usepackage{graphicx, xfrac, lineno, float, subcaption, tasks, comment, xcolor, booktabs, multirow}
\usepackage[T1]{fontenc}
\usepackage[normalem]{ulem}
\usepackage{datetime}
\usepackage{colortbl}
\usepackage{enumitem}
\usepackage{datetime}

\settasks{
	counter-format=(tsk[r]),
	label-width=4ex
}

\begin{document}
\begin{frontmatter}

\titledata{A tris of perfect matchings in\\ bridgeless claw-free cubic graphs}{}  

\authordata{Jean Paul Zerafa}
{Department of Mathematics\\ L-Universit\`{a} ta' Malta\\ Msida, MSD 2080, Malta}
{jean-paul.zerafa@um.edu.mt}{}

\keywords{Cubic graph, perfect matching, \mbox{Fan--Raspaud} conjecture, \mbox{Berge--Fulkerson} conjecture, cycle double cover.}               
\msc{05C15, 05C21, 05C70} 

\begin{abstract}
A proof of the cycle double cover conjecture was recently announced, yielding an $8$-cycle double cover for every bridgeless graph. The stronger $5$-cycle double cover conjecture, which is still open, is equivalent to the statement that the edge set of every bridgeless claw-free cubic graph can be covered by at most four perfect matchings. Perfect matchings in bridgeless cubic graphs have been studied extensively, with two of the main conjectures in this area being the Berge--Fulkerson and the Fan--Raspaud conjectures. The latter, a consequence of the former, states that every bridgeless cubic graph admits three perfect matchings $M_1, M_2, M_3$ such that $M_1\cap M_2\cap M_3=\emptyset$. Here we show that the Fan--Raspaud conjecture is true for bridgeless claw-free cubic graphs. This also gives further information on the interaction of perfect matchings in a class where the $5$-cycle double cover conjecture requires control of four of them.
\end{abstract}

\end{frontmatter}
\section{Introduction}
Throughout, graphs are finite and loopless, and parallel edges are allowed.
A graph is \emph{simple} if it contains no parallel edges, whilst we use the
term \emph{multigraph} when we wish to emphasise the presence of parallel edges. Let $G$ be a bridgeless cubic graph. 
For an edge set $F\subseteq E(G)$ and a vertex $v\in V(G)$, we denote the set of edges belonging to $F$ which are incident with $v$ by $\delta_F(v)$, and the number of edges belonging to $F$ which are incident with $v$ by $d_F(v)$, that is, $d_F(v)=|\delta_F(v)|$. A \emph{join} $J$ of $G$ is a subset of $E(G)$ such that $d_J(v)$ is equal to $1$ or $3$, for every $v\in V(G)$, whilst a \emph{perfect matching} $M$ of $G$ is a join for which $d_M(v)=1$, for every $v\in V(G)$.

A \emph{circuit} is a connected $2$-regular subgraph. An \emph{even subgraph} $C$ of $G$ is a subgraph such that $d_{E(C)}(v)$ is even for every $v\in V(G)$. Since $G$ is cubic, $d_{E(C)}(v)\in\{0,2\}$ for every $v\in V(G)$, and hence every non-empty even subgraph is a disjoint union of circuits, possibly of length $2$, that is, on two vertices (such a circuit is referred to as a \emph{digon}). Note that an even subgraph is not necessarily spanning.

The cycle double cover conjecture, independently proposed by several authors in various forms \cite{ItaiRodeh,SeymourCDC,SzekeresCDC}, asserts that every
bridgeless graph admits a cycle double cover, equivalently, a collection of even subgraphs such that every edge of the graph considered belongs to exactly two of them. Very recently, OpenAI \cite{OpenAICDC} announced a proof of the cycle double cover conjecture, yielding eight even subgraphs such that each edge belongs to exactly two of them; see also the expositions by Geelen \cite{GeelenCDC} and Oum \cite{OumCDC}. The corresponding assertion with five even subgraphs, referred to as the $5$-cycle double cover ($5$-CDC) conjecture \cite{Celmins5CDC, Preissmann5CDC}, remains open. The $5$-CDC conjecture is equivalent to the following statement: every bridgeless claw-free cubic graph can be covered by four perfect matchings \cite{HakMkr}.

The problem of covering the edge set of a bridgeless cubic graph with a small number of perfect matchings has received considerable attention and is closely related to several classical conjectures on cubic graphs. The Berge--Fulkerson conjecture asserts that every bridgeless cubic graph can be covered by five perfect matchings (a Berge cover). This formulation, due to Berge (unpublished), was shown by Mazzuoccolo \cite{MazzuoccoloEquivalence} to be equivalent to Fulkerson's conjecture which states that every bridgeless cubic graph admits six perfect matchings such that every edge belongs to exactly two of them (a Fulkerson cover) \cite{BergeFulkerson}.

A well-known consequence of the Berge--Fulkerson conjecture is the Fan--Raspaud conjecture \cite{FanRaspaud}, which asserts that every bridgeless cubic graph admits three perfect matchings $M_1,M_2,M_3$, referred to as an FR-triple, such that
$M_1\cap M_2\cap M_3=\emptyset$ (any three perfect matchings from a Fulkerson cover form an FR-triple). Thus, covering and intersection properties of perfect matchings are closely intertwined in the study of bridgeless cubic graphs. Further results on the Fan--Raspaud conjecture include \cite{KaiserRaspaud,MacajovaSkovieraOddness2,s4gmjp}; we also refer the reader to the theorems in \cite{quelling1,quelling2} and the conjectures in \cite{MacajovaSkovieraOddCuts,MazzuoccoloS4}, which deal with pairs of perfect matchings.

By the classical edge-colouring theorems of Vizing \cite{Vizing} and Shannon \cite{Shannon}, every cubic graph is either $3$- or $4$-edge-colourable; we shall call these Class~I and Class~II cubic graphs, respectively. If $G$ is a Class~I cubic graph, then its three colour classes are perfect matchings and taking two copies of each yields a Fulkerson cover of $G$. Thus, the Berge--Fulkerson and the Fan--Raspaud conjectures hold trivially for Class~I cubic graphs, and the problem is reduced to Class~II bridgeless cubic graphs. If Berge's conjectured bound holds, Class~II bridgeless cubic graphs can be partitioned into two classes: those that can be covered by $4$ perfect matchings and those that require $5$.

In this context, coverings by four perfect matchings occupy a natural intermediate position. Although not every Class II bridgeless cubic graph can be covered by four perfect matchings---the Petersen graph being the standard example---restricting the covering problem to claw-free cubic graphs leads to a conjecture of a very different nature. As mentioned before, the assertion that every bridgeless claw-free cubic graph can be covered by four perfect matchings is equivalent to the $5$-CDC conjecture.

Before continuing, observe also that a cover of a bridgeless cubic graph by four perfect matchings immediately yields an FR-triple, revealing a natural connection between the two notions. Motivated by this connection, we study how three perfect matchings can
interact in the claw-free setting. We prove that every bridgeless claw-free cubic graph admits an FR-triple. Besides establishing the Fan--Raspaud conjecture for this class, this may provide further insight into the interaction of four perfect matchings underlying the $5$-CDC formulation.


\section{Main result}
We first recall that, by a result of Matthews \cite{Matthews} together with Jaeger's $8$-flow theorem \cite{Jaeger1979,Jaeger1975}, the edge set of every bridgeless graph can be covered by three even subgraphs. We shall use this fact in the following lemma.

\begin{lemma}\label{lem:triangle-expansion}
Let $G$ be a bridgeless cubic graph and let $G^{\triangle}$ be obtained from $G$ by expanding each vertex of $G$ into a triangle. Then $G^{\triangle}$ admits three perfect matchings with empty intersection.
\end{lemma}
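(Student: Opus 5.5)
The plan is to reduce the statement to the covering of $E(G)$ by three even subgraphs (Matthews together with Jaeger's $8$-flow theorem, recalled above). The link is a bijection between perfect matchings of $G^{\triangle}$ and joins of $G$.

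\textbf{Step 1 (matchings of $G^{\triangle}$ versus joins of $G$).} In $G^{\triangle}$, each vertex $v$ of $G$ becomes a triangle $T_v$. The edges of $G^{\triangle}$ are of two kinds: the edges of $E(G)$, which we call outer edges, and the triangle edges. For $e\in\delta(v)$, write $t_v(e)$ for the edge of $T_v$ that is not incident with the endpoint of $e$ in $T_v$.

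Let $M$ be a perfect matching of $G^{\triangle}$. Since $T_v$ has three vertices, an odd number of them, namely $1$ or $3$, are matched by outer edges. Hence $J=M\cap E(G)$ is a join of $G$. Conversely, $J$ determines $M$ completely:
\begin{itemize}
\item if $d_J(v)=3$, then $M$ contains no edge of $T_v$;
\item if $\delta_J(v)=\{e\}$, then $M\cap E(T_v)=\{t_v(e)\}$.
\end{itemize}
So perfect matchings of $G^{\triangle}$ correspond bijectively to joins of $G$. Moreover, since $G$ is cubic, $J$ is a join exactly when $E(G)\setminus J$ induces an even subgraph, because each vertex then has degree $0$ or $2$ in the complement.

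\textbf{Step 2 (choice of the three matchings).} Since $G$ is bridgeless, take even subgraphs $C_1,C_2,C_3$ of $G$ whose edge sets cover $E(G)$. Set $J_i=E(G)\setminus E(C_i)$, which is a join, and let $M_i$ be the perfect matching of $G^{\triangle}$ corresponding to $J_i$ under Step 1.

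On outer edges the intersection is empty:
$$M_1\cap M_2\cap M_3\cap E(G)=J_1\cap J_2\cap J_3=E(G)\setminus\bigl(E(C_1)\cup E(C_2)\cup E(C_3)\bigr)=\emptyset.$$

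\textbf{Step 3 (triangle edges).} This is the only point needing care, though it is short. Suppose some triangle edge $t_v(e)$, with $\delta(v)=\{e,f,g\}$, lies in all three $M_i$. Then $\delta_{J_i}(v)=\{e\}$ for every $i$. Hence $e\in J_1\cap J_2\cap J_3$, that is, $e$ belongs to no $C_i$. This contradicts the fact that the $C_i$ cover $E(G)$. Therefore $M_1\cap M_2\cap M_3=\emptyset$, as required.

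I expect no real obstacle: the main work is the correspondence in Step 1, and the covering result quoted before the lemma supplies the rest.
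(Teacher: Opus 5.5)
Your proposal is correct and follows essentially the same route as the paper: you cover $E(G)$ by three even subgraphs, take their complements as joins, extend each join to a perfect matching of $G^{\triangle}$ inside the triangles, and rule out a common triangle edge because it would force the same outer edge into all three joins. Your bijection between joins and perfect matchings is a slightly more explicit packaging of the paper's extension step; the argument is otherwise identical.
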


\begin{proof}
Let $\mathcal{C}_1,\mathcal{C}_2,\mathcal{C}_3$ be three even subgraphs of $G$ such that $E(G)=E(\mathcal{C}_1)\cup E(\mathcal{C}_2)\cup E(\mathcal{C}_3)$ and, for every $i\in\{1,2,3\}$, let $J_i=E(G)\setminus E(\mathcal{C}_i)$. Since $G$ is cubic and every vertex has even degree in $\mathcal{C}_i$, every vertex has degree either $1$ or $3$ in $J_i$. Thus, each $J_i$ is a join of $G$. Moreover,
\[
    J_1\cap J_2\cap J_3
    =E(G)\setminus\left(E(\mathcal{C}_1)\cup E(\mathcal{C}_2)\cup E(\mathcal{C}_3)\right)
    =\emptyset.
\]

For every $v\in V(G)$, let $T_v$ denote the triangle in $G^{\triangle}$ obtained after expanding $v$, as illustrated in Figure~\ref{fig:expansion}. Each edge in $E(G)$ naturally determines a unique edge in $E(G^\triangle)$ not contained in any triangle $T_v$; we call these corresponding edges.

\begin{figure}[H]
\centering
\begin{tikzpicture}[
    >=Latex,
    vertex/.style={circle, draw, fill=black, inner sep=1.5pt},
    every node/.style={font=\small}
]

\begin{scope}[shift={(0,0)}]

    \path (-1.65,-1.75) rectangle (1.65,1.75);

    \node at (0,2) {$G$};

    \node[vertex, label=left:$v$] (v) at (0,0) {};

    \draw (v) -- (-1.05,-1.05);
    \draw (v) -- ( 1.05,-1.05);

    \draw[very thick] (v) -- (0,1.05);

    \node[right=2pt] at (0,0.60) {$f$};

    \node at (0,-2) {%
        \tikz[baseline=-0.5ex]
        \draw[very thick] (0,0) -- (0.75,0);
        \hspace{0.15cm} edge in $J_i$};

\end{scope}

\draw[->, >=stealth, very thick]
    (1.95,0) -- (4.65,0)
    node[midway, above=3pt] {expanding $v$}
    node[midway, below=3pt] {into $T_v$};

\begin{scope}[shift={(6.6,0)}]

    \path (-1.65,-1.75) rectangle (1.65,1.75);

    \node at (0,2) {$G^{\triangle}$};

    \begin{scope}[scale=0.82]

        \node[vertex] (a) at (-1,-0.75) {};
        \node[vertex] (b) at ( 1,-0.75) {};
        \node[vertex, label=left:$x$] (x) at (0,0.95) {};

        \draw (a) -- (x) -- (b);

        \draw (a) -- (-1.75,-1.50);
        \draw (b) -- ( 1.75,-1.50);

        \draw[very thick, dashed] (a) -- (b);
        \draw[very thick, dashed] (x) -- (0,1.75);

        \node at (0,-0.05) {$T_v$};
        \node[below=2pt] at (0,-0.75) {$e$};

    \end{scope}

\node at (0,-2) {%
        \tikz[baseline=-0.5ex]
        \draw[very thick,dashed] (0,0) -- (0.75,0);
    \hspace{0.15cm} edge in $M_i$};

\end{scope}
\end{tikzpicture}
\caption{Expanding vertex $v$ into triangle $T_v$ and extension of $J_i$ to $M_i$ when $d_{J_i}(v)=1$.}
\label{fig:expansion}
\end{figure}

We extend each $J_i$ to a perfect matching $M_i$ of $G^{\triangle}$ as follows.
If $d_{J_i}(v)=3$, all three vertices of $T_v$ are already incident with
an edge corresponding to an edge of $J_i$, and we add no edge of $T_v$
to $M_i$. If $d_{J_i}(v)=1$, exactly one vertex of $T_v$ is incident with
an edge corresponding to an edge of $J_i$, and we add to $M_i$ the edge
joining the other two vertices of $T_v$. Clearly, $M_i$ is a perfect
matching of $G^{\triangle}$.

Next, we show that $M_1\cap M_2\cap M_3=\emptyset$. Suppose otherwise, and let $e\in M_1\cap M_2\cap M_3$. If $e$ corresponds to an edge of $G$, then $e\in J_1\cap J_2\cap J_3$, a contradiction.

Consequently, $e$ belongs to some triangle $T_v$ as in Figure \ref{fig:expansion}. Let $x$ be the unique
vertex of $T_v$ not incident with $e$. Let $i\in\{1,2,3\}$. Since $M_i$ is a perfect matching of $G^{\triangle}$, $d_{M_i}(x)=1$ and $\delta_{M_i}(x)\cap E(T_v)=\emptyset$. Let $f$ be the edge of $G$ incident with $v$ corresponding to the edge in $\delta_{M_i}(x)$ of $G^{\triangle}$. The construction of $M_i$ implies that $\delta_{J_i}(v)=\{f\}$ for every $i\in\{1,2,3\}$. In particular, $f\in J_1\cap J_2\cap J_3$, again a contradiction. Hence, $M_1\cap M_2\cap M_3=\emptyset$.
\end{proof}

A \emph{diamond} is the complete graph on four vertices with an edge deleted as in Figure~\ref{fig:diamond}. The two distinguished vertices of degree two of a diamond are called the \emph{head} and \emph{tail}.

\begin{figure}[H]
    \centering
    \begin{tikzpicture}[
        vertex/.style={circle, draw, fill=black, inner sep=1.5pt}
    ]
        \node[vertex, label=left:$x$]  (x) at (0,0) {};
        \node[vertex, label=above:$p$] (p) at (1,0.6) {};
        \node[vertex, label=below:$q$] (q) at (1,-0.6) {};
        \node[vertex, label=right:$y$] (y) at (2,0) {};

        \draw
            (x) -- (p)
            (x) -- (q)
            (p) -- (q)
            (p) -- (y)
            (q) -- (y);
    \end{tikzpicture}
    \caption{A diamond with vertices $x,p,q,y$, having head $x$ and tail $y$.}
    \label{fig:diamond}
\end{figure}

Let $D_1,\ldots,D_k$ be a maximal sequence of diamonds. A \emph{string of diamonds} is obtained by joining the tail of $D_i$ to the head of $D_{i+1}$ by an edge, for every $i\in\{1,\ldots,k-1\}$. Thus, the resulting string has exactly two vertices of degree two, namely the head of $D_1$ and the tail of $D_k$, which are called the \emph{head} and \emph{tail} of the string, respectively.  A string consisting of a single diamond is simply a diamond.

Let $G$ be a bridgeless cubic graph and let $u$ and $v$ be two adjacent vertices of $G$. Replacing the edge $uv\in E(G)$ by a string of diamonds with head $x$ and tail $y$ means deleting $uv$, inserting the string of diamonds, and adding the edges $ux$ and $yv$, as illustrated in Figures~\ref{fig:string-of-diamonds} and \ref{fig:diamond-replacement}.

\begin{figure}[H]
    \centering
    \begin{tikzpicture}[
        vertex/.style={circle, draw, fill=black, inner sep=1.5pt}
    ]

        \node[vertex, label=below:$u$] (u1) at (0,0) {};
        \node[vertex, label=below:$v$] (v1) at (1.5,0) {};
        \draw (u1) -- (v1);

        \draw[->, >=stealth, very thick] (2,0) -- (3,0);

        \node[vertex, label=below:$u$] (u2) at (3.5,0) {};
        \node[vertex, label=below:$x$] (x)  at (4.5,0) {};

        \node[vertex] (p1) at (5.5,0.6) {};
        \node[vertex] (q1) at (5.5,-0.6) {};
        \node[vertex] (m1) at (6.5,0) {};

        \node[vertex] (m2) at (7.5,0) {};
        \node[vertex] (p2) at (8.5,0.6) {};
        \node[vertex] (q2) at (8.5,-0.6) {};
        \node[vertex, label=below:$y$] (y)  at (9.5,0) {};
        \node[vertex, label=below:$v$] (v2) at (10.5,0) {};

        \draw
            (u2) -- (x)
            (x) -- (p1)
            (x) -- (q1)
            (p1) -- (m1)
            (q1) -- (m1)
            (p1) -- (q1)
            (m2) -- (p2)
            (m2) -- (q2)
            (p2) -- (y)
            (q2) -- (y)
            (p2) -- (q2)
            (y) -- (v2);

        \draw (m1) -- (6.75,0);
        \draw[semithick, dotted] (6.75,0) -- (7.25,0);
        \draw (7.25,0) -- (m2);

    \end{tikzpicture}
    \caption{Replacing the edge $uv$ with a string of diamonds having head $x$ and tail $y$.}
    \label{fig:string-of-diamonds}
\end{figure}

Let $\mathcal{E}\subseteq E(G)$. The graph obtained from $G$ by replacing each edge in $\mathcal{E}$ by a string of diamonds is denoted by $G^{\diamond(\mathcal{E})}$. If more than one string of diamonds is used in the construction of $G^{\diamond(\mathcal{E})}$, that is, if $|\mathcal{E}|>1$, the strings need not have the same length.

We next show that replacing an edge of a bridgeless cubic graph admitting an FR-triple by a diamond preserves the existence of an FR-triple.

\begin{lemma}\label{lem:diamond}
Let $G$ be a bridgeless cubic graph admitting three perfect matchings $M_1,M_2, \linebreak M_3$ such that $M_1\cap M_2\cap M_3=\emptyset$. Let $uv\in E(G)$. The graph $G^{\diamond(\{uv\})}$ obtained from $G$ by replacing the edge $uv$ with a diamond also admits three perfect matchings with empty intersection.
\end{lemma}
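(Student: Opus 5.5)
We extend each $M_i$ locally to a perfect matching $M_i'$ of $G' = G^{\diamond(\{uv\})}$, choosing the extensions so that no edge lies in all three. Let the diamond have head $x$, tail $y$ and middle vertices $p,q$, with the new edges $ux$ and $yv$. Every edge of $G$ other than $uv$ is an edge of $G'$. We keep $M_i\setminus\{uv\}$ unchanged and only decide how $M_i'$ behaves on the edges $ux, xp, xq, pq, py, qy, yv$.

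There are two local patterns. If $uv\in M_i$, then $u$ and $v$ must be matched into the diamond, so $ux,yv\in M_i'$ and we add $pq$. This choice is forced. If $uv\notin M_i$, then $ux,yv\notin M_i'$, and we must perfectly match $\{x,p,q,y\}$ inside the diamond. There are exactly two ways to do this: $\{xp,qy\}$ and $\{xq,py\}$. In either case $M_i'$ is a perfect matching of $G'$. Moreover, every edge of $G'$ outside the diamond and different from $ux, yv$ is common to all three $M_i'$ only if it was common to all three $M_i$. Hence no such edge lies in the triple intersection.

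Let $S=\{i : uv\in M_i\}$. Since $M_1\cap M_2\cap M_3=\emptyset$, we have $|S|\le 2$.

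First, the edges $ux$ and $yv$ lie in $M_i'$ exactly when $i\in S$, so they are not in all three matchings.

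Second, the edge $pq$ also lies in $M_i'$ exactly when $i\in S$, so the same argument applies to it.

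Third, the four edges $xp,xq,py,qy$ are used only by indices $i\notin S$. If $|S|\ge 1$, at most two indices use them, so none of these edges is in all three matchings. If $|S|=0$, all three indices choose one of the two internal matchings $\{xp,qy\}$ and $\{xq,py\}$. We use both choices, for instance $\{xp,qy\}$ for $M_1'$ and $M_2'$, and $\{xq,py\}$ for $M_3'$. Then each of $xp,xq,py,qy$ is missed by some $M_i'$.

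Therefore $M_1'\cap M_2'\cap M_3'=\emptyset$. The only point requiring a deliberate choice is the case $S=\emptyset$, where we must not give all three matchings the same internal matching. Nothing else in the argument is delicate, and no earlier result beyond the hypothesis is needed.
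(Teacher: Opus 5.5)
Your proposal is correct and follows essentially the same approach as the paper. The paper also uses $\{ux,pq,yv\}$ when $uv\in M_i$ and one of the two internal matchings $\{xp,yq\}$, $\{xq,yp\}$ otherwise, splitting these two internal matchings between $M_1,M_2$ and $M_3$ only when $uv$ lies in none of the $M_i$.
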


\begin{proof}
Let $uv\in E(G)$ be the edge to be replaced by a diamond. Denote the two vertices of degree $2$ in the diamond by $x$ and $y$, and the other two vertices by $p$ and $q$, where $u$ is adjacent to $x$ and $v$ is adjacent to $y$. Thus, the edges of $G^{\diamond(\{uv\})}$ replacing $uv$ are $ux, xp, xq, pq, yp, yq, yv$ (see Figure \ref{fig:diamond-replacement}).

\begin{figure}[H]
    \centering
    \begin{tikzpicture}[
        vertex/.style={circle, draw, fill=black, inner sep=1.5pt}
    ]

        \node[vertex, label=below:$u$] (u1) at (0.5,0) {};
        \node[vertex, label=below:$v$] (v1) at (2,0) {};
        \draw (u1) -- (v1);

        \draw[->, >=stealth, very thick] (2.5,0) -- (3.5,0);

        \node[vertex, label=below:$u$] (u2) at (4,0) {};
        \node[vertex, label=below:$x$] (x)  at (5,0) {};
        \node[vertex, label=above:$p$] (p)  at (6,0.6) {};
        \node[vertex, label=below:$q$] (q)  at (6,-0.6) {};
        \node[vertex, label=below:$y$] (y)  at (7,0) {};
        \node[vertex, label=below:$v$] (v2) at (8,0) {};

        \draw
            (u2) -- (x)
            (x) -- (p)
            (x) -- (q)
            (p) -- (q)
            (p) -- (y)
            (q) -- (y)
            (y) -- (v2);

    \end{tikzpicture}
    \caption{Replacing the edge $uv$ with a diamond having head $x$ and tail $y$.}
    \label{fig:diamond-replacement}
\end{figure}

Let $P=\{ux,pq,yv\}$, $A=\{xp,yq\}$, and $B=\{xq,yp\}$. For each $i\in\{1,2,3\}$ such that $uv\in M_i$, replace $uv$ in $M_i$ with
the three edges in $P$. If $uv\notin M_i$, extend $M_i$ inside the
diamond using either $A$ or $B$. In either case a perfect matching of
$G^{\diamond(\{uv\})}$ is obtained. Let $N_i$ be the resulting perfect matching of $G^{\diamond(\{uv\})}$.

Let $\mathcal{I}=\{i\in\{1,2,3\}:uv\in M_i\}$. Since $M_1\cap M_2\cap M_3=\emptyset$, we have $|\mathcal{I}|\leq 2$. We consider two cases.  

\textbf{Case 1.} If $\mathcal{I}\neq\emptyset$, for each $i\notin \mathcal{I}$, we extend $M_i$ to $N_i$ by using the edges from $A$. Every edge of $P$ then belongs to precisely $|\mathcal{I}|\leq2$ of the resulting perfect matchings $N_1, N_2, N_3$, whereas every edge of $A$ belongs to at most $3-|\mathcal{I}|\leq2$ of them. No edge of $B$ is used when extending $M_1, M_2, M_3$ to $N_1, N_2, N_3$. Consequently, no newly introduced edge belongs to all three of $N_1, N_2, N_3$, and so $N_1\cap N_2\cap N_3=\emptyset$.

\textbf{Case 2.} If $\mathcal{I}=\emptyset$, we respectively extend $M_1$ and $M_2$ to $N_1$ and $N_2$ by using the edges from $A$, and extend $M_3$ to $N_3$ by using the edges from $B$. Since $A\cap B=\emptyset$, once again no edge of the
diamond belongs to $N_1\cap N_2 \cap N_3$. Moreover, outside the diamond the three perfect matchings coincide with $M_1,M_2,M_3$, and so their common intersection is empty. Again, $N_1\cap N_2\cap N_3=\emptyset$.

Cases 1 and 2 imply that $G^{\diamond(\{uv\})}$ admits an FR-triple.
\end{proof}

A \emph{claw} is the complete bipartite graph $K_{1,3}$ with partite sets of cardinality $1$ and $3$, and a graph is \emph{claw-free} if it contains no induced copy of $K_{1,3}$. The class of simple bridgeless claw-free cubic graphs admits a well-known structural description due to Oum \cite{Oum2011}. Namely, every connected simple bridgeless claw-free cubic graph is of one of the
following three types.
\begin{enumerate}[label=(\roman*)]
    \item The complete graph on four vertices $K_4$.
    \item A ring of diamonds (see Figure~\ref{fig:ring-of-diamonds}) containing at least two diamonds. Note that a connected claw-free cubic graph in which every vertex belongs to an induced diamond is a ring of diamonds.
    \item A graph obtained from a bridgeless cubic graph $G$ by:
    \begin{itemize}
        \item first expanding each vertex $v\in V(G)$ into a triangle $T_v$, thus obtaining \(G^{\triangle}\); and 
        \item then replacing each edge in a set $\mathcal{E}\subseteq E(G^{\triangle})\setminus\bigcup_{v\in V(G)}E(T_v)$ by a string of diamonds.
    \end{itemize}    
    Thus, $\mathcal{E}$ consists only of edges of $G^{\triangle}$ which correspond to edges of $G$, and contains no edge belonging to one of the newly introduced triangles. In this sense, we denote the resulting simple bridgeless claw-free cubic graph by $G^{\triangle,\diamond(\mathcal{E})}$.
\end{enumerate}

\begin{figure}[H]
    \centering
    \begin{tikzpicture}[
        vertex/.style={circle, draw, fill=black, inner sep=1.5pt}
    ]

        \node[vertex] (t1) at (-3,0) {};
        \node[vertex] (l1) at (-2,-0.6) {};
        \node[vertex] (r1) at (-2, 0.6) {};
        \node[vertex] (b1) at (-1,0) {};

        \draw
            (t1) -- (l1)
            (t1) -- (r1)
            (l1) -- (r1)
            (l1) -- (b1)
            (r1) -- (b1);

        \node[vertex] (t2) at (0,0) {};
        \node[vertex] (l2) at (1,-0.6) {};
        \node[vertex] (r2) at (1, 0.6) {};
        \node[vertex] (b2) at (2,0) {};

        \draw
            (t2) -- (l2)
            (t2) -- (r2)
            (l2) -- (r2)
            (l2) -- (b2)
            (r2) -- (b2);

        \draw (b1) -- (t2);

        \draw
            (t1).. controls (-3,1.2) and (-2.4,2.2) .. (-1.6,2.2);

        \draw[dashed]
            (-1.6,2.2).. controls (-0.9,2.2) and (-0.1,2.2) ..(0.6,2.2);

        \draw(0.6,2.2).. controls (1.4,2.2) and (2,1.2) ..(b2);

    \end{tikzpicture}
    \caption{A ring of diamonds.}
    \label{fig:ring-of-diamonds}
\end{figure}

Oum's characterisation above is stated for simple cubic graphs. Hakobyan and Mkrtchyan \cite{HakobyanMkrtchyanSylvester} extended this characterisation to cubic multigraphs. In this setting, $K_2^3$, consisting of two vertices joined by three parallel edges, appears alongside $K_4$ as an additional case, whilst digons (circuits of length two) may occur alongside diamonds in the corresponding strings and rings. We shall return to this extension after proving the main result for simple graphs.

\begin{theorem}\label{thm:claw-free-FR}
Every simple bridgeless claw-free cubic graph admits an FR-triple.
\end{theorem}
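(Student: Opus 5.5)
We will argue by applying Oum's structural description to each connected component, since an FR-triple of a disconnected graph is obtained by taking the union of FR-triples of its components. It therefore suffices to treat a connected simple bridgeless claw-free cubic graph $H$, and we split into the three types (i)--(iii).

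\textbf{Type (i).} Here $H=K_4$, which is Class~I. Its three perfect matchings are pairwise disjoint, so they form an FR-triple.

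\textbf{Type (ii).} Here $H$ is a ring of $k\geq 2$ diamonds. We will show directly that $H$ is $3$-edge-colourable, so that its three colour classes are disjoint perfect matchings. The plan is to colour every edge joining consecutive diamonds (tail of $D_i$ to head of $D_{i+1}$) with colour $1$. Inside each diamond with head $x$, tail $y$ and middle vertices $p,q$, we colour $pq$ with $1$, $xp,yq$ with $2$, and $xq,yp$ with $3$. Each of $x$ and $y$ then sees colours $1,2,3$, as do $p$ and $q$, so the colouring is proper. Alternatively, one could view a ring of diamonds as a digon ($K_2^3$ minus an edge closed up) with its edges replaced by diamonds and apply Lemma~\ref{lem:diamond}, but the direct colouring avoids the multigraph base case.

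\textbf{Type (iii).} Here $H=G^{\triangle,\diamond(\mathcal{E})}$ for some bridgeless cubic graph $G$ and some set $\mathcal{E}$ of corresponding edges. The steps are as follows.
\begin{enumerate}[label=(\alph*)]
\item By Lemma~\ref{lem:triangle-expansion}, $G^{\triangle}$ admits an FR-triple. Note that $G$ may have parallel edges, but that lemma only uses Matthews--Jaeger, which holds for bridgeless multigraphs, and $G^{\triangle}$ is bridgeless.
\item We then insert the strings of diamonds one diamond at a time. A string of length $k$ replacing $uv$ is obtained by first replacing $uv$ with a diamond, and then repeatedly replacing the edge joining the current last tail to $v$ with a new diamond.
\item Each intermediate graph is cubic and bridgeless, because replacing an edge of a bridgeless cubic graph by a diamond preserves bridgelessness: the diamond is $2$-edge-connected internally and the two new pendant edges inherit the cycle through $uv$.
\item Hence Lemma~\ref{lem:diamond} applies at every step. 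An induction on the total number of diamonds then yields an FR-triple of $H$.
\end{enumerate}

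The main point to check carefully is that the hypotheses of the two lemmas are met along the way. Specifically, $G$ in type (iii) may be a multigraph, and bridgelessness must be maintained after each diamond insertion. The order in which the edges of a string are replaced must also match the definition of a string, so that successive diamonds are joined tail-to-head by an edge. This is routine, and I expect no deeper obstacle, since the combinatorial content is already contained in Lemmas~\ref{lem:triangle-expansion} and~\ref{lem:diamond}.
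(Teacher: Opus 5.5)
Your proposal is correct and follows the paper's proof. It uses the same reduction to the connected case and the same case split over Oum's three types. For type (iii) it applies Lemma~\ref{lem:triangle-expansion} and then Lemma~\ref{lem:diamond} repeatedly. Your explicit $3$-edge-colouring of a ring of diamonds and your check that bridgelessness survives each diamond insertion only fill in details the paper leaves implicit.

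Your unused aside about obtaining a ring from a digon does not fit Lemma~\ref{lem:diamond} as stated, since a digon is not cubic. The idea does work if you start from $K_4$ instead: a ring of $k$ diamonds is $K_4$ with one edge replaced by a string of $k-1$ diamonds.
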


\begin{proof}
Let $G$ be a simple bridgeless claw-free cubic graph. We may assume that $G$ is connected. If $G$ is the complete graph on four vertices or a ring of diamonds, then $G$ is $3$-edge-colourable, and so the three colour classes form three pairwise disjoint perfect matchings. Hence, we may assume that $G$ is of Type (iii) according to Oum's classification given above. This means that $G$ is some $G_0^{\triangle, \diamond(\mathcal{E})}$ for some bridgeless cubic graph $G_0$ and for some $\mathcal{E}\subseteq E(G_0^{\triangle})\setminus\bigcup_{v\in V(G_0)}E(T_v)$. 


By Lemma~\ref{lem:triangle-expansion}, $G_0^{\triangle}$ admits three perfect matchings with empty intersection. Moreover, $G$ can be obtained by repeatedly replacing an edge with a diamond. By Lemma~\ref{lem:diamond}, after each such replacement the resulting graph still admits three perfect matchings with empty intersection.

Therefore, $G$ admits three perfect matchings $M_1,M_2,M_3$ such that $M_1\cap M_2\cap M_3=\emptyset$, and the result follows.
\end{proof}

\subsection{Extension to multigraphs}

We conclude by showing that the result extends to cubic multigraphs. Recall that a \emph{digon} (or $2$-circuit) consists of two vertices joined by two parallel edges. Let $G$ be a cubic graph and let $uv\in E(G)$. By replacing $uv$ by a digon we mean deleting $uv$, introducing two new vertices $x$ and $y$ joined by two parallel edges $e_1$ and $e_2$, and adding the edges $ux$ and $yv$, as illustrated in Figure \ref{fig:digon-replacement}. The notions of head and tail extend naturally to digons.

\begin{figure}[H]
    \centering
    \begin{tikzpicture}[
        vertex/.style={circle, draw, fill=black, inner sep=1.5pt}
    ]

        \node[vertex, label=below:$u$] (u1) at (0.5,0) {};
        \node[vertex, label=below:$v$] (v1) at (2,0) {};
        \draw (u1) -- (v1);

        \draw[->, >=stealth, very thick] (2.5,0) -- (3.5,0);

        \node[vertex, label=below:$u$] (u2) at (4,0) {};
        \node[vertex, label=below:$x$] (x)  at (5,0) {};
        \node[vertex, label=below:$y$] (y)  at (7,0) {};
        \node[vertex, label=below:$v$] (v2) at (8,0) {};

        \draw (u2) -- (x);
        \draw (y) -- (v2);

        \draw[bend left=30] (x) to node[midway, above] {$e_1$} (y);
        \draw[bend right=30] (x) to node[midway, below] {$e_2$} (y);

    \end{tikzpicture}
    \caption{Replacing the edge $uv$ with a digon having vertices $x$ and $y$.}
    \label{fig:digon-replacement}
\end{figure}

\begin{lemma}\label{lem:digon-replacement}
Let $G$ be a cubic graph admitting an FR-triple, and let $G'$ be obtained from $G$ by replacing an edge by a digon. Then $G'$ also admits an FR-triple.
\end{lemma}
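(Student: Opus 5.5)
We mirror the proof of Lemma~\ref{lem:diamond}. Let $M_1,M_2,M_3$ be an FR-triple of $G$ and let $uv$ be the edge replaced by a digon. As in Figure~\ref{fig:digon-replacement}, the new edges are $ux$, $yv$ and the parallel edges $e_1,e_2$ joining $x$ and $y$. Put $P=\{ux,yv\}$.

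For each $i\in\{1,2,3\}$ with $uv\in M_i$, we form $N_i$ from $M_i$ by replacing $uv$ with the two edges of $P$. Then $u$, $x$, $y$ and $v$ are each covered exactly once, and every other vertex is covered as in $M_i$. For each $i$ with $uv\notin M_i$, the vertices $u$ and $v$ are already covered by edges of $M_i$ other than $uv$. In this case we form $N_i$ by adding to $M_i$ exactly one of $e_1$ or $e_2$, which covers $x$ and $y$. In both cases $N_i$ is a perfect matching of $G'$. Moreover, $N_i$ agrees with $M_i$ on every edge of $G'$ that is also an edge of $G$.

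Let $\mathcal{I}=\{i:uv\in M_i\}$. Since $M_1\cap M_2\cap M_3=\emptyset$, we have $|\mathcal{I}|\le 2$. Each edge of $P$ lies in exactly $|\mathcal{I}|\le 2$ of the matchings $N_i$, so no edge of $P$ is in all three.

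The only remaining choice concerns $e_1$ and $e_2$, which are used by the $3-|\mathcal{I}|$ indices outside $\mathcal{I}$.
\begin{itemize}
\item If $\mathcal{I}\neq\emptyset$, there are at most two such indices, and we use $e_1$ for all of them.
\item If $\mathcal{I}=\emptyset$, we use $e_1$ for $N_1$ and $N_2$, and $e_2$ for $N_3$.
\end{itemize}
In either case neither $e_1$ nor $e_2$ belongs to all three matchings. Every other edge of $G'$ is an edge of $G$, and there $N_i$ agrees with $M_i$. Hence $N_1\cap N_2\cap N_3=\emptyset$, so $N_1,N_2,N_3$ is an FR-triple of $G'$.

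The only step needing care is the case $\mathcal{I}=\emptyset$: all three matchings then need a digon edge, and we must split them between $e_1$ and $e_2$. This is exactly where the parallel edges are used, playing the role of the disjoint sets $A$ and $B$ in Lemma~\ref{lem:diamond}.
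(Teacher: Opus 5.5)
Your proof is correct and is essentially the paper's argument. Matchings containing $uv$ get $ux$ and $yv$ instead, which then lie in at most two matchings. The others are completed by a digon edge, and $e_1$ and $e_2$ are split between them only when no $M_i$ contains $uv$.
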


The proof of Lemma \ref{lem:digon-replacement} follows by a simple argument, which we include for completeness. Let $M_1,M_2,M_3$ be an FR-triple of $G$, and suppose that $uv\in E(G)$ is replaced by a digon as in Figure \ref{fig:digon-replacement}. We extend the perfect matchings $M_i$ to perfect matchings of $G'$ as follows. For each $i\in\{1,2,3\}$, if $uv\in M_i$, replace $uv$ by $ux$ and $yv$; otherwise, extend $M_i$ by one of $e_1,e_2$. Since $uv\notin M_1\cap M_2\cap M_3$, at most two of the resulting perfect
matchings contain $ux$ and $yv$. If $uv$ belongs to none of the $M_i$, choose the extensions so that the same edge amongst $e_1$ and $e_2$ is not used by all three perfect matchings. The resulting three perfect matchings form an FR-triple of $G'$, proving Lemma \ref{lem:digon-replacement}.

A \emph{string of diamonds and digons} is a sequence $D_1,\ldots,D_k$, where each $D_i$ is either a diamond or a digon, such that consecutive members are joined in the usual way. A \emph{ring of diamonds and digons} is defined analogously.

Hakobyan and Mkrtchyan \cite{HakobyanMkrtchyanSylvester} extended Oum's characterisation to cubic graphs which may contain parallel
edges. In the bridgeless case \cite{Mkrtchyan2025}, every connected claw-free cubic graph belongs to one of the following types.
\begin{enumerate}[label=(\roman*)]
    \item The graph $K_4$ or the graph $K_2^3$, where the latter consists of two vertices joined by three parallel edges.

    \item A ring of diamonds and digons containing at least two members.

    \item A graph obtained from a connected bridgeless cubic graph $G$ by:
    \begin{itemize}
        \item first expanding each vertex $v\in V(G)$ into a triangle $T_v$, thus obtaining $G^{\triangle}$; and
        \item then replacing each edge in a set $\mathcal{E}\subseteq E(G^{\triangle})\setminus\bigcup_{v\in V(G)}E(T_v)$ by a string of diamonds and digons.
    \end{itemize}
\end{enumerate}

We now turn to the proof of our final result. The reverse operation of that shown in Figure \ref{fig:digon-replacement} is called \emph{smoothing} of a digon---it consists of deleting the two vertices of the digon and restoring the edge $uv$. This operation shall be used in what follows.

\begin{corollary}
Every bridgeless claw-free cubic graph admits an FR-triple.
\end{corollary}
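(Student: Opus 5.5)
The plan is to reduce to the connected case and then use the multigraph classification of \cite{HakobyanMkrtchyanSylvester,Mkrtchyan2025}. An FR-triple of each component combines into one for the whole graph, so we may assume $G$ is connected. We then go through the three types.

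\textbf{Types (i) and (ii).} Both $K_4$ and $K_2^3$ are $3$-edge-colourable; for $K_2^3$ the three parallel edges are themselves three disjoint perfect matchings. A ring of diamonds and digons is also $3$-edge-colourable. To see this, smooth every digon except possibly one, using smoothing as the reverse of the replacement in Figure~\ref{fig:digon-replacement}. This leaves one of the following:
\begin{itemize}
\item a ring of diamonds, which is covered by Theorem~\ref{thm:claw-free-FR}, or more directly by its explicit $3$-edge-colouring;
\item a ring consisting of one diamond and one digon;
\item a ring consisting only of digons.
\end{itemize}
The last two can be coloured by hand. Alternatively, start from a ring of length two whose FR-triple is exhibited directly, and then reinsert digons with Lemma~\ref{lem:digon-replacement} and diamonds with Lemma~\ref{lem:diamond}. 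Both lemmas hold verbatim for multigraphs, because their proofs only use local modifications.

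\textbf{Type (iii).} Here $G$ arises from a connected bridgeless cubic multigraph $G_0$. First, Lemma~\ref{lem:triangle-expansion} applies to $G_0$ even when it has parallel edges, since Matthews' and Jaeger's results hold for bridgeless multigraphs. This gives an FR-triple of $G_0^{\triangle}$. Next, $G$ is obtained from $G_0^{\triangle}$ by a sequence of single replacements: each edge in $\mathcal{E}$ is replaced by a string, and the string is built up one member at a time. Each step replaces some edge by a diamond or by a digon. Applying Lemma~\ref{lem:diamond} or Lemma~\ref{lem:digon-replacement} at each step preserves an FR-triple, so $G$ admits one.

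\textbf{Main obstacle.} The main care point is that the earlier lemmas were stated for simple graphs or assumed bridgelessness. I need to check that each is purely local and valid for multigraphs. The proof of Lemma~\ref{lem:diamond} never uses simplicity. For Lemma~\ref{lem:triangle-expansion}, the triangle expansion of a multigraph remains well defined.

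A second point is the degenerate small rings in type (ii). An example is a ring of two digons, which is a $4$-cycle with two doubled edges. These may not be reachable from any base case of the form already handled, so I will verify their $3$-edge-colourability explicitly. Every ring of diamonds and digons is $3$-edge-colourable by an alternating pattern. In each member, colour the two connecting edges with colour $1$. Inside a digon, use colours $2$ and $3$. Inside a diamond, colour the middle edge $1$, and colour the remaining four edges $2,3$ around the two triangles consistently.
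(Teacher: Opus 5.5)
Your proof is correct, but it is organised differently from the paper's.

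\textbf{The paper's route.} The paper does not revisit the classification type by type. It smooths digons until it reaches either a simple bridgeless claw-free cubic graph or $K_2^3$. It then invokes Theorem~\ref{thm:claw-free-FR} (or the trivial decomposition of $K_2^3$) as a black box, and reinserts the digons one at a time with Lemma~\ref{lem:digon-replacement}.

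\textbf{Your route.} You work directly inside the multigraph classification. For rings, $K_4$ and $K_2^3$ you use an explicit $3$-edge-colouring, and it is valid: colour the connecting edges and the middle edge $pq$ with $1$, then $xp,qy$ with $2$ and $xq,py$ with $3$. For type (iii), you apply Lemma~\ref{lem:triangle-expansion} to a possibly non-simple $G_0$. You then build each string member by member using Lemmas~\ref{lem:diamond} and~\ref{lem:digon-replacement}.

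\textbf{Trade-offs.} The paper's argument is shorter because it reuses the simple case. However, it relies on reading off from the classification that smoothing digons stays inside the class and terminates at a simple graph or $K_2^3$. Your argument avoids that reduction altogether and effectively proves the simple and multigraph cases uniformly. The price is checking the lemmas in the multigraph setting, and that check is lighter than you suggest. The paper's standing convention allows parallel edges, so Lemmas~\ref{lem:triangle-expansion} and~\ref{lem:diamond} are already stated for multigraphs. Only Theorem~\ref{thm:claw-free-FR} is restricted to simple graphs.

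\textbf{Small rings.} Your worry that small rings might be unreachable from a base case is unfounded for the paper's approach. Smoothing one digon of a ring of two digons yields $K_2^3$. Smoothing the digon in a ring of one diamond and one digon yields $K_4$. Your explicit colouring makes this moot anyway, and it also supersedes the somewhat muddled list of leftover rings in your type (ii) paragraph.
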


\begin{proof}
By Theorem~\ref{thm:claw-free-FR}, the result holds in the simple case, so it suffices to consider a bridgeless claw-free cubic multigraph $G$. We may assume that $G$ is connected. By the characterisation due to Hakobyan and Mkrtchyan, repeatedly smoothing digons reduces $G$ either to a simple bridgeless claw-free cubic graph or to $K_2^3$. In the former case, the resulting graph admits an FR-triple by Theorem~\ref{thm:claw-free-FR}. In the latter case, the result is immediate, since the three edges of $K_2^3$ are three pairwise disjoint perfect matchings. The original graph $G$ can then be recovered by successively replacing edges by digons. By Lemma~\ref{lem:digon-replacement}, each such replacement preserves the existence of an FR-triple. Hence $G$ admits an FR-triple.
\end{proof}

\end{document}